\documentclass[12pt]{amsart}
\usepackage{amsmath, amssymb, amsthm, mathrsfs} 
\usepackage{hyperref} 
\usepackage{enumitem} 
\usepackage{geometry} 
\usepackage{xcolor}
\usepackage{cleveref}
\usepackage{graphicx} 
\usepackage{tikz}
\usetikzlibrary{decorations.markings}
\usepackage{pgfplots}
\usepackage{subcaption}
\usepackage{etoolbox} 

\AtBeginEnvironment{proof}{\begingroup\setlength{\parindent}{0pt}}
\AtEndEnvironment{proof}{\endgroup}

\pgfplotsset{compat=1.18}
\usepgfplotslibrary{groupplots}

\numberwithin{equation}{section}

\theoremstyle{plain}
\newtheorem{theorem}{Theorem}
\newtheorem{lemma}{Lemma}
\newtheorem{proposition}{Proposition}

\theoremstyle{definition}

\newtheorem{example}{Example}
\newtheorem{remark}{Remark}

\newcommand{\R}{\mathbb{R}}
\newcommand{\C}{\mathbb{C}}

\newcommand{\supp}{\mathrm{supp}\,}
\newcommand{\WF}{\mathrm{WF}_h}

\newcommand{\Order}{\mathcal{O}} 
\newcommand{\PW}{\mathcal{P}_1} 
\newcommand{\QW}{\mathcal{P}_2}
\newcommand{\HW}{\mathcal{P}}

\renewcommand{\Re}{\operatorname{Re}}
\renewcommand{\Im}{\operatorname{Im}}

\begin{document}

\title[Propagation of Semiclassical Wave Front Sets]{Propagation of Semiclassical Wave Front Sets for Non-Self-Adjoint Matrix-Valued Pseudo-Differential Operators}

\author{Yuta Kawamura}




\begin{abstract}
We study the semiclassical wave front set of vector-valued microlocal solutions to the pseudo-differential system with a non-Hermitian matrix-valued symbol.
The main result establishes the propagation of the semiclassical wave front sets under a generalized microhyperbolicity condition for the real part of the symbol and the non-negativity for the imaginary part.
The proof is based on H\"{o}rmander’s positive commutator method.
\end{abstract}

\keywords{Microlocal analysis, Semiclassical wave front set,  Propagation of singularity, Microhyperbolicity, Non-self-adjoint matrix-valued pseudodifferential operators}

\maketitle
\section{Introduction}

Understanding how singularities propagate is a central problem in the analysis of hyperbolic partial differential equations.
It is well known that the wavefront set of solutions to hyperbolic equations propagates along bicharacteristics, namely the null geodesics of the principal symbol; see Lars H\"ormander~\cite{hormander4}. However, in many physically relevant situations, waves evolve in domains with boundaries, where interactions with the boundary significantly influence their behavior.
In this setting, the propagation of singularities becomes more subtle: for the wave equation, and more generally for strictly hyperbolic operators, singularities follow generalized broken bicharacteristics, undergoing reflections at the boundary in accordance with the laws of geometric optics; see Victor Ivrii and Vesselin Petkov~\cite{IP}.
The case of systems of self-adjoint h-pseudodifferential operators has been extensively studied by Victor Ivrii~\cite{ivrii2019microlocal}.
In the present work, we are concerned with the non-self-adjoint (non-Hermitian) setting, where new phenomena arise and the classical propagation picture requires further refinement.

A function $a(x,\xi;h)$ on the phase space $\R^{2n}=\R_x^n\times\R_\xi^n$ depending on a small positive parameter $h$ is said to be in the symbol class $S(1)$ when it is bounded in $\R^{2n}$ uniformly in $h$ with all its derivatives. The Weyl quantization $a^W$ 
$$  a^W(x,hD;h)u := \frac{1}{(2 \pi h)^n}\iint e^{\frac{i}{h}(x-y) \cdot \xi} a \left( \frac{x+y}{2},\xi ;h\right) u(y) \,dy \,d\xi,
$$
is a bounded operator in $L^2(\R^n)$ (Calder\'on-Vaillantcourt's theorem).

Let $u(x;h) \in L^2(\R^n)$ with $\|u\| \le 1$ for all small $h$.
The semiclassical wave front set $\WF(u)$ is defined as the complementary set of the points
$(x_0,\xi_0) \in \R^{2n}$ such that
there exists a symbol $\chi(x,\xi) \in S(1)$ with $\chi(x_0,\xi_0) = 1$ satisfying $\|\chi^W(x,hD)u\| = \Order(h^\infty)$
as $h \to 0$. The set $\WF(u)$ is closed.

We assume that $p$ is of the form
$p(x,\xi;h) = p_0(x,\xi) + h r(x,\xi;h)$,
with $p_0(x,\xi)$, $r(x,\xi;h)\in S(1)$, and that $p_0(x,\xi)$ is of {\it real principal type} at a point $(x_0,\xi_0)$, that is,
$p_0$ is real-valued and satisfies
\begin{equation} \label{rpeq}
  p_0(x_0,\xi_0)=0, \qquad \partial p_0(x_0,\xi_0) \ne 0.
\end{equation}
The Hamilton flow of $p_0$ is the integral curve of the Hamilton vector field $H_{p_0}=
\partial_{\xi}p \cdot \partial_x - \partial_x p \cdot \partial_{\xi}
    \simeq \left( \partial_ \xi p, -\partial_ x p\right)$.
    
 Assume $p^Wu=0$, or more generally $\WF(p^Wu)=\emptyset$ in a neighborhood $U$ of $(x_0,\xi_0)$. Then the propagation theorem tells us that $\WF(u)$ is invariant along the Hamilton flow in $U$ passing through $(x_0,\xi_0)$.
In other words, $(x_0,\xi_0)\notin \WF(u)$ if either $\WF(u)\cup U_+=\emptyset$ or $\WF(u)\cup U_-=\emptyset$, where $U_\pm
=\{(x,\xi);H_{p_0}(x_0,\xi_0)\cdot(x,\xi)\gtrless H_{p_0}(x_0,\xi_0)\cdot(x_0,\xi_0)\}$.  

This theorem has been proved by various methods.
See for example Maciej Zworski~\cite{zw} for the reduction method to the normal form $p_0=\xi_1$, and André Martinez~\cite{martinez2002introduction} for the approach using the FBI transform.

The theorem was generalized to complex-valued symbol $p_0$. Semyon Dyatlov~\cite{Dyatlov2011PROPAGATIONOS} (see also Semyon Dyatlov and Maciej Zworski~\cite{dyatlov2019mathematical}) proved, under the condition that 
$\Re p_0$ is of real principal type and $\Im p_0$ is nonnegative,  that $\WF(u)\cup U_+=\emptyset$ implies $(x_0,\xi_0)\notin \WF(u)$ (but $\WF(u)\cup U_-=\emptyset$ does not imply $(x_0,\xi_0)\notin \WF(u)$).
The proof is based on H\"ormander's positive commutator method and uses the construction of a special escape function that grows exponentially along the Hamiltonian flow of $\Re p_0$ in a neighborhood of $(x_0,\xi_0)$.

On the other hand, Victor Ivrii~\cite{ivrii2019microlocal} generalized the propagation theorem to Hermitian matrix-valued symbols $P(x,\xi;h) = (p_{i,j}(x,\xi;h))_{1 \leq i,j \leq N}$.
Let $P(x,\xi;h) \in S(1)$ be of the form
\begin{equation}\label{H_0}
  P(x,\xi;h) = P_0(x,\xi) + hR(x,\xi;h)
\end{equation}
with $P_0(x,\xi) \in S(1)$ independent of $h$ and $R(x,\xi;h) \in S(1)$, and suppose $\det P_0(x_0,\xi_0)=0$.

The real principal type condition in the scalar case is replaced with the (what he calls) {\it microhyerbolicity condition} at a point $(x_0,\xi_0)$  in a direction $(\xi^*,-x^*)\in\R^{2n}\setminus (0,0)$,
which means that for $g(x,\xi)=x^*\cdot x+\xi^*\cdot\xi$, one has
\begin{equation}\label{eq:ivrii-mch}
\left\langle \{P_0, g I_N\}|_{(x_0,\xi_0)} \omega, \omega \right\rangle  \geq \frac{1}{C} \left\lVert \omega  \right\rVert ^2 - C  \left\lVert P_0(x_0,\xi_0) \omega  \right\rVert^2  \quad
    \text{for all }\omega \in \C^N.
\end{equation}
Remark that the real principal type condition \eqref{rpeq} at $(x_0,\xi_0)$ in the scalar case implies the microhyperbolicity at the same point in the direction $H_{p_0}(x_0,\xi_0)$.
Under the condition \eqref{eq:ivrii-mch}, he proved that $(x_0,\xi_0)\notin \WF(u)$ if $\WF (P^Wu)\cap U=\emptyset$ and either $\WF(u)\cup U_+=\emptyset$ or $\WF(u)\cup U_-=\emptyset$, where
$U_\pm=\{(x,\xi)\in U; g(x,\xi)\gtrless g(x_0,\xi_0)\}.$
Here  $\WF(u)=\bigcup_{k=1}^N \WF(u_k)$ for $u={}^t(u_1,\ldots,u_N)$.

In this article, we further generalize the result of Ivrii to non-Hermitain  matrix-valued symbols employing the positive commutation method that Dyatlov used for the scalar case.
We assume a generalized microhyperbolicity condition \eqref{mcheq}, which is similar to \eqref{eq:ivrii-mch} but $P_0$ on the left hand side is replaced with $\Re P_0=(P_0+P_0^*)/2$. Together with the  non-negativity of the imaginary part $\Im P_0=(P_0-P_0^*)/(2i)$, we prove that $\WF(u)\cup U_+=\emptyset$ implies $(x_0,\xi_0)\notin \WF(u)$ as in the result of Dyatlov.

The key of the proof consists in the local construction of an exponentially growing scalar escape function $g(x,\xi)$
near the point $(x_0,\xi_0)$ for the matrix-valued symbol $\Re P_0(x,\xi)$.
Compared with the scalar case, this construction is more involved,
because it is necessary to construct a common escape function for all the zero eigenvalues of the symbol (see Lemma~\ref{lem1}). 

Microhyperbolicity~\eqref{eq:ivrii-mch} or its non-Hermitian version~\eqref{mcheq} is relevant when the characteristic set $\{ (x,\xi) \in \R^{2n}| \det \Re P_0(x,\xi) = 0\}$ has variable multiplicity, that is, when the zero set of eigenvalues of $\Re P_0$ cross.
Such a situation of variable multiplicity has first been considered for hyperbolic systems in the context of $C^\infty$ well-posedness of Cauchy problems and scattering problems (see Victor Ivrii and Vesselin Petkov~\cite{IP} and  Vesselin Petkov~\cite{P} and references therein).
In the semiclassical setting, the microhyperbolicity condition appears in the asymptotics of the eigenvalue counting function or spectral shift function (see Marouane Assal, Mouez Dimassi and Setsuro Fujiié~\cite{10.1093/imrn/rnx149} and Mouez Dimassi and Johannes Sj\"ostrand~\cite{Dimassi_Sjostrand_1999}).
In the one-dimentional case, in particular, the precise asymptotic distribution of resonances was studied in the presence of crossings of hamiltonian flows (see Marouane Assal, Setsuro Fujiié and Kenta Higuchi~\cite{10.1093/imrn/rnad290} and Setsuro Fujiié, André Martinez and Takuya Watanabe~\cite{published_papers/42164222}).
There, the so-called microlocal scattering matrix at each crossing point plays an essential role.
The definition of this matrix is based on the propagation of singularities at crossing points.
Our theorem will be a starting point of this theory in multidimentional setting.

This paper is organized as follows.
In Section~2, we state the main theorem and  
give several illustrative examples.
Section~3 is devoted to the proof of the main theorem.  We first prove Lemma~\ref{lem1}, which guarantees the existence of an escape function with required support properties. Using this function, we establish a propagation estimate (Proposition~\ref{propest}), which directly proves the main theorem.

\section{Main Results}

\subsection{Assumptions and main result}

In this subsection, we present the main theorem of this paper.

Let $(x_0,\xi_0)\in \R^{2n}, \; (x^*,\xi^*) \in \R^{2n}\setminus \{0\}$ and
 $g(x,\xi) = (x^*,\xi^*) \cdot (x,\xi).$


Suppose $P \in S(1)$ is of the form \eqref{H_0}.
We assume that there exists a constant $C>0$ such that, for all $\omega \in \C^N$, one has
\begin{equation}\label{mcheq}
\left\langle \{\Re P_0, g I_N\}|_{(x_0,\xi_0)} \omega, \omega \right\rangle  \geq \frac{1}{C} \left\lVert \omega  \right\rVert ^2 - C  \left\lVert P_0(x_0,\xi_0) \omega  \right\rVert^2.
\end{equation}
Here $\{A,B\} = \sum \partial_{\xi_j} A \, \partial_{x_j} B - \partial_{x_j} A \, \partial_{\xi_j} B$ is the Poisson bracket. 
We call a scalar function $g$ satisfying~\eqref{mcheq} an escape function.



In the Hermitian case, the condition~\eqref{mcheq} is equivalent to the microyperbolicity~\eqref{eq:ivrii-mch} in the direction $H_g$.
If $N=1$ moreover, \eqref{mcheq} reduces to either the ellipticity $p_0(x_0,\xi_0) \neq 0$ or the real principal type condition~\eqref{rpeq}.
\begin{remark}\label{rem;3}
Assume $\Im P_0 \geq O$. If $\Re P_0$ is microhyperbolic in the direction $H_g$, then $P_0$ satisfies \eqref{mcheq} for this $g$.\begin{footnote}{
The condition $\Im P_0 \geq O$ implies  $C\|P_0 \, \omega\| \geq \| (\Re P_0) \, \omega \|$.
 In fact, let $W = \{ \omega \in \C^{N}; P_0\omega = 0 \}$. 
If $\omega \in W$, then $\langle (\Im P_0)\omega, \omega \rangle = 0$, since $\langle P_0\omega, \omega \rangle = 0$.
From $\|(\Im P_0)^{1/2}\omega\| = 0$, we obtain $(\Im P_0)\omega = 0$, and hence $(\Re P_0)\omega = 0$.
If $\omega \notin W$, there exists $\omega_1 \in W^\perp$ such that $P_0\omega = P_0\omega_1 \ne 0$.
Since $\inf\{ \|P_0\omega_1\|;\, \omega_1 \in W^\perp, \|\omega_1\| = 1 \} > 0$, the inequality follows.}\end{footnote} 
However, the converse is not true.
For exapmle, in the case $n = 1$,
$P_0 = \text{diag} (\xi,i)$ satisfies \eqref{mcheq} for $g(x,\xi) = x$, but $\Re P_0 = \text{diag}(\xi,0)$ is not microhyperbolic in any direction.
\end{remark}

\begin{theorem}\label{PoS1}
Suppose that $P \in S(1)$ is of the form \eqref{H_0} and satisfies \eqref{mcheq},
and let $u\in L^2(\R^n;\C^N)$ with $\|u\| \leq 1$.
Assume moreover that there exists a neighborhood $U$ of $(x_0,\xi_0)$ such that
$\Im P_0 \geq O$ in $U$ and
\begin{align}
\WF (P^W u) & \cap U  = \emptyset, \label{wfhu} \\ 
\WF(u) & \cap  U_+ = \emptyset, \label{wvu+}
\end{align}
where $U_+ = \{(x,\xi) \in U ; g(x,\xi) > g(x_0,\xi_0)\}$.
Then
\[
(x_0,\xi_0)\notin \WF(u).
\]
\end{theorem}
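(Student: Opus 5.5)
The plan is H\"ormander's positive commutator method, with a scalar escape function built as in Dyatlov's scalar argument. By \eqref{wfhu} and \eqref{wvu+}, the only places where $u$ can carry wave front set near $(x_0,\xi_0)$ are where $g\le g(x_0,\xi_0)$ and $P$ is non-elliptic. We transport regularity from $U_+$ into $(x_0,\xi_0)$ via a commutator estimate.

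\textbf{Step 1: escape function with prescribed support (Lemma~\ref{lem1}).} We construct a real scalar $f\in C_c^\infty(U)$ with $f\ge0$, $f(x_0,\xi_0)>0$, and the splitting
\[
\{\Re P_0, f^2 I_N\} \;\ge\; \frac1C f^2 I_N - C\,P_0^*P_0 - b^2 I_N \quad\text{in the sense of Hermitian matrices,}
\]
where $b\in C_c^\infty$ is supported in $U_+$ (the region where $u$ is already known to be smooth). A natural candidate is
\[
f=e^{-\lambda (g - g(x_0,\xi_0))}\,\chi\big(g - g(x_0,\xi_0)\big)\,\psi(x,\xi),
\]
with $\psi$ a cutoff near $(x_0,\xi_0)$ and $\chi$ vanishing for $g-g(x_0,\xi_0)\le -\delta$. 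Then $\{\Re P_0,f^2\}$ contains $-2\lambda f^2\{\Re P_0,g\}$; the sign is fixed by the orientation of the direction.

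We first extend \eqref{mcheq} from $(x_0,\xi_0)$ to a small neighborhood by continuity. The terms where the derivative falls on $\psi$ are dominated by the main term once $\lambda$ is large, provided $\psi$ is chosen with $\{\Re P_0,\psi\}$ small relative to $\lambda$. The terms where it falls on $\chi$ are of the good sign on one side and are supported in $U_+$ on the other side, so they are absorbed into $b^2$.

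This step is the main obstacle. The bound \eqref{mcheq} only gives positivity modulo $\|P_0\omega\|^2$, and the cutoff errors must be controlled uniformly over all eigen-directions of $\Re P_0$ simultaneously. This is why a single common scalar escape function is essential. I would first try treating the $\psi$-derivative terms as $O(f\cdot\text{small})$ and using $\lambda\gg1$, and the $\chi$-derivative terms via the localization property of $\chi'$.

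\textbf{Step 2: the propagation estimate (Proposition~\ref{propest}).} Let $F=f^W I_N$ and consider
\[
\Im\langle P^W u, F^2 u\rangle=\frac{1}{2i}\langle [P^W,F^2]u,u\rangle+\langle (\Im P)^W F u,Fu\rangle+\Order(h^2).
\]
We have $[\Re P^W,F^2]=\frac hi(\{\Re P_0,f^2\})^W+\Order(h^2)$, since $f$ is scalar and commutes with matrices at principal level. The term involving $\Im P_0\ge0$ is handled by the sharp G\aa rding inequality for matrix symbols, which gives $\ge -Ch\|\tilde\psi^W u\|^2$ on an enlarged support. Applying sharp G\aa rding again to the Step 1 inequality yields
\[
h\|Fu\|^2 \le C\big(\|\tilde\psi^WP^Wu\|^2+ h\|b^Wu\|^2 + h\|P^W Fu\|^2\big) + \Order(h^2)\|\tilde\psi^Wu\|^2.
\]
The term $\|P^WFu\|$ is controlled by $\|FP^Wu\|+h\|\tilde\psi^Wu\|$.

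\textbf{Step 3: bootstrap.} By \eqref{wfhu} and \eqref{wvu+}, the terms $\|\tilde\psi^WP^Wu\|$ and $\|b^Wu\|$ are $\Order(h^\infty)$. We then iterate: if $\|\tilde\psi^W u\|=\Order(h^{k/2})$ on a slightly larger family of such $f$'s, the inequality gives $\|Fu\|=\Order(h^{(k+1)/2})$. A nested sequence of supports shrinking finitely many times toward $(x_0,\xi_0)$ then yields $\|Fu\|=\Order(h^\infty)$. Since $f(x_0,\xi_0)\ne0$, this gives $(x_0,\xi_0)\notin\WF(u)$.
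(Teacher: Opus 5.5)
\textbf{The gap is in Step 1.} You flag this step as the main obstacle, and the mechanism you propose for it does not work.

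Write $g_0=g(x_0,\xi_0)$ and take $f=e^{\lambda(g-g_0)}\chi(g-g_0)\psi$, with the exponent's sign corrected as explained below. The factor $\chi$ depends on $g$ only, so all localization transversal to $\nabla g$ must come from $\psi$. Hence the edge of $\supp\psi$ must cross the slab $\{g_0-\delta<g<g_0\}$, where $\chi>0$ and which is not in $U_+$. There the term $2e^{2\lambda(g-g_0)}\chi^2\psi\{\Re P_0,\psi I_N\}$ competes with $2\lambda f^2\{\Re P_0,gI_N\}$. Their ratio behaves like $|\nabla\psi|/(\lambda\psi)$, which is unbounded as $\psi\to0$ for any compactly supported $\psi$, so no choice of $\lambda$ absorbs it. Nor can you make $\{\Re P_0,\psi I_N\}$ small. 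At each point it equals $\{\Re P_0,\ell I_N\}$ with $\ell$ the linear function of gradient $\nabla\psi$, a matrix whose sign is uncontrolled. In the scalar case one can take $\psi$ invariant under the Hamilton flow of $\Re p_0$; when eigenvalues of $\Re P_0$ cross, no such flow exists. What is really needed is that, outside $U_+$, $\nabla f$ always lies in the cone of escape directions. Then $\{\Re P_0,fI_N\}$ is a nonnegative combination of brackets $\{\Re P_0,\ell I_N\}$ with each $\ell$ satisfying \eqref{mcheq}. This forces a specific shape of $\supp f$, and it is the real content of Lemma~\ref{lem1}.

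\textbf{The missing idea.} Condition \eqref{mcheq} is open in both the base point and the direction $z^*=(x^*,\xi^*)$. So $2n$ linearly independent linear functions $g_k=z_k^*\cdot(x,\xi)$, with $z_k^*$ close to $z^*$ and $z^*$ a positive combination of the $z_k^*$, are all escape functions near $(x_0,\xi_0)$. The paper takes $z_k^*=z^*-\delta e_k$ after normalizing $z^*=(1,\dots,1)$. One then sets $f=\prod_k\phi(g_k-g_k(x_0,\xi_0))$, with $\supp\phi=[-\varepsilon,c+\varepsilon]$ and $\phi'\ge M'\phi$ off $[c,c+\varepsilon]$. By the chain rule,
$\{\Re P_0,fI_N\}+C\hat fP_0^*P_0=\sum_k\partial_{g_k}f\,(\{\Re P_0,g_kI_N\}+CP_0^*P_0)\ge MfI_N$
wherever every factor is in its increasing phase. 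Consider the remaining set $\Omega$, where some $g_k-g_k(x_0,\xi_0)\ge c$ and all are $\ge-\varepsilon$. It lies in $\{g>g_0\}$, because $g-g_0$ is a positive multiple of $\sum_k(g_k-g_k(x_0,\xi_0))$ and $c>(2n-1)\varepsilon$. No separate transversal cutoff appears.

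With this lemma, plus rescaled copies of it for the nested family (Remark~\ref{rem1}), your Steps 2 and 3 are essentially the paper's proof of Proposition~\ref{propest}. Two bookkeeping points need fixing:
\begin{itemize}
\item The correct identity is $\Im\langle(\Re P)^Wu,F^2u\rangle=\frac1{2i}\langle[F^2,(\Re P)^W]u,u\rangle$, whose principal part is $+\frac h2\{\Re P_0,f^2I_N\}$. So $f$ must increase with $g$, and your weight $e^{-\lambda(g-g_0)}$ has the wrong sign.
\item Sharp G\aa rding must be applied as $\langle(\Im P)^WFu,Fu\rangle\ge-C_0h\|Fu\|^2$, with $C_0$ independent of $f$, and the growth rate must be chosen larger than $C_0$. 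A bound $-Ch\|\tilde\psi^Wu\|^2$ on a larger support, as you wrote, would destroy the $h^{1/2}$ gain needed for the bootstrap.
\end{itemize}
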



\begin{remark}
Theorem~\ref{PoS1} has already been proved in two special cases:
when $N = 1$~\cite{Dyatlov2011PROPAGATIONOS} \cite{dyatlov2019mathematical},
and when $H_0$ is Hermitian~\cite{ivrii2019microlocal}.
The present theorem generalizes both of these results.
\end{remark}

\begin{remark}
Let $U_- = \{(x,\xi) \in U ; g(x,\xi) < g(x_0,\xi_0) \}$.
If we assume $\Im P_0 \le O$ and 
\begin{equation}
  \WF(u)\cap U_-
= \emptyset, \label{wvu-}
\end{equation}
instead of $\Im P_0 \le O$ and~\eqref{wvu+}, Theorem~\ref{PoS1} still holds true.
In general, however, Theorem~\ref{PoS1} does not remain valid if one assumes $\Im P_0 \ge O$ together with~\eqref{wvu-}, or $\Im P_0 \le O$ together with~\eqref{wvu+}.
\end{remark}

\subsection{Several examples}

In this section, we present several examples related to the theorem.
In Example~\ref{prop3}, we consider a already diagonalized symbol examine conditions equivalent to~\eqref{mcheq}.
Example~\ref{nonsmooth1}  deals with symbols that are not smoothly diagonalizable but nevertheless satisfy the condition~\eqref{mcheq}.

\begin{example} \label{prop3}
Let $P_1 = \text{diag}(p_1, p_2, \ldots, p_k)$, where $p_j(x_0,\xi_0) = 0$ for $j = 1,2,\ldots,k$,
and let $P_2$ be an arbitrary $(N-k)\times(N-k)$ matrix such that $\det P_2(x_0,\xi_0) \neq 0$,
and define
\[
P_0 =
\begin{pmatrix}
P_1 & O_{k,\,N-k} \\
O_{N-k,\,k} & P_2
\end{pmatrix}.
\]
  If $P_0$ satisfies \eqref{mcheq} at $(x_0,\xi_0)$ for $g(x,\xi) = (x^*,\xi^*)\cdot (x,\xi)$, then one has
  \begin{equation} \label{realprincipal}
  H_{ \Re  p_j} g (x_0,\xi_0) > 0, 
  \quad (j = 1,2,\ldots,k).  
  \end{equation}
Let $\gamma_j(t) = \exp{tH_{\Re  p_j}}(x_0,\xi_0)$ be a Hamiltonian flow with initial value $\gamma_j(0) = (x_0,\xi_0)$.
Then the equation \eqref{realprincipal}  means that
$$
\gamma_j( \pm t) \in U_{\pm} = \{ (x,\xi) \in U; g(x,\xi) \gtrless g(x_0,\xi_0) \}  \quad (j  = 1,2, \ldots , k),
$$
for sufficiently small $t > 0$ and some neighborhood $U$ of $(x_0,\xi_0)$.
In other word, the function $g(x,\xi)$ is increasing along all of Hamilton flows $\gamma_j(t)$.
\end{example}

\begin{remark}
In the case $\det P_2(x_0,\xi_0) \neq 0$, the imaginary part of $P_2$ is not required to be nonnegative in order for Theorem~\ref{PoS1} to hold.
This is because, the wave front set can appear only on the zero set of the eigenvalues,
therefore the conclusion of Theorem~\ref{PoS1} follows immediately under this assumption.
\end{remark}


The followings are examples of a symbol $P_0$ with \eqref{mcheq} that is not smoothly diagonalized.

\begin{example}\label{nonsmooth1}
In the case $n=1$, the symbol
\[
P_{0}
=
\begin{pmatrix}
x & \xi \\
\xi & -x
\end{pmatrix}
+ k x I_{2}, \qquad (k>1),
\]
satisfies \eqref{mcheq} at $(x_{0},\xi_{0})=(0,0)$ for $g=-\xi$.
The matrix $P_{0}$ has eigenvalues
$
p_{\pm}=k x \pm \sqrt{x^{2}+\xi^{2}},
$
which are not smooth at the origin.
However, on the characteristic set near the origin $ p_{\pm}^{-1}(0)
=\{\, (x,\xi)\in \mathbb{R}^{2}: \sqrt{k^{2}-1}\,x \pm |\xi| = 0 \,\}\setminus \{0\}$, the function $g(x,\xi)$ increases along the Hamiltonian flows $\gamma_{\pm}$ which lie in the set $p_{\pm}^{-1}(0)$, as in Example~1 (see Figure~\ref{fig:one}).

If $n=2$, the symbol
\[
P_{0}
= |\xi|^{2} I_2 +
\begin{pmatrix}
x_{1} & x_{2} \\
x_{2} & -x_{1}
\end{pmatrix}
+ k x_{1} I_{2}, \qquad (k>1),
\]
satisfies \eqref{mcheq} at the point $(x_{0},\xi_{0})=(0,0)$ for $g=-\xi_{1}$.
The eigenvalues of $P_{0}$ are given by
$
p_{\pm} = |\xi|^{2} + k x_{1} \pm |x|,
$
which are not smooth on $\{ x=0 \}$.
On the set $\{\, (x,\xi)\in \mathbb{R}^{4} : \xi_{1}^{2} + (k \mp 1)x_{1} = 0,\quad x_{2}=\xi_{2}=0 \,\} \setminus \{0\}$,
the Hamiltonian flows $\gamma_{\pm}$ are directed toward the origin, and the function $g$ increases along this direction (see Figure~\ref{fig:two}).
\end{example}


\begin{figure}[htbp]
\centering
\includegraphics[width=0.45\linewidth]{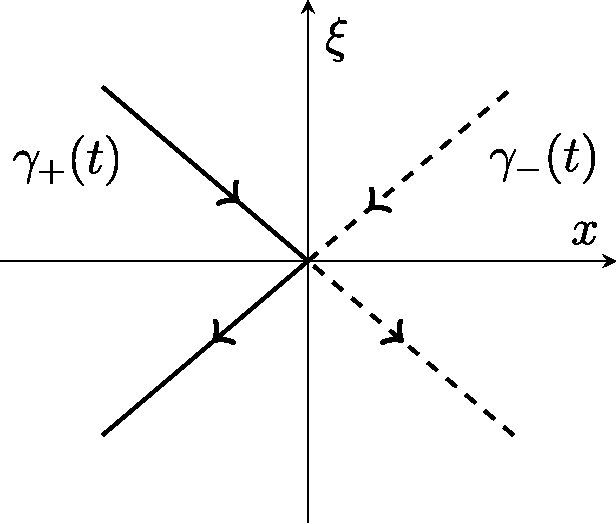}
\caption{}
\label{fig:one}
\end{figure}

\begin{figure}[htbp]
\centering
\includegraphics[width=0.45\linewidth]{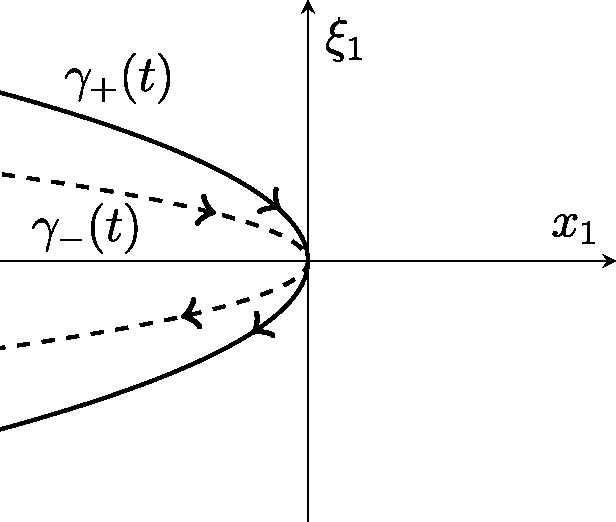}
\caption{}
\label{fig:two}
\end{figure}

\section{proof}


In this section, we prove Theorem~\ref{PoS1}.
Since Theorem~\ref{PoS1} follows immediately from Proposition~\ref{propest} stated below,
we focus on proving that proposition.
To prove Proposition~\ref{propest}, we use Remark~\ref{rem1} which is a slight extension of  the following lemma.

\begin{lemma}\label{lem1}
Let $P_0(x,\xi) \in S(1)$ and $g(x,\xi) = (x^*,\xi^*) \cdot (x,\xi)$ satisfy \eqref{mcheq}.
Then there exists a neighborhood $U$ of $(x_0,\xi_0)$ such that, for any constant $M > 0$, there exist functions
$f(x,\xi), \hat f(x,\xi) \in C_0^\infty(\mathbb{R}^{2n}; \mathbb{R})$ and a compact set $\Omega$ 
for which the followings hold:
\begin{enumerate}[label=\textbf{($C$\arabic*):},ref=($C$\arabic*),align=left, leftmargin=*]
  \item\label{item;C1} $f(x_0,\xi_0) > 0$ and $f \ge 0$ everywhere;
  \item\label{item;C2} $\supp \hat f \cup \supp f \subset U$;
  \item\label{item;Co} $ \Omega \subset U_+ := \{(x,\xi) \in U; g(x,\xi) > g(x_0,\xi_0) \}$;
  \item\label{item;C3} there exists a constant $C > 0$ such that $\{\Re P_0, f I_N\} + C\,\hat f\, P_0^* P_0 \ge M f I_N$ on the complement of $\Omega$.
\end{enumerate}
\end{lemma}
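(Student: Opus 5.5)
Following Dyatlov's scalar construction, we take $f$ to grow exponentially in the direction of $g$. The matrix structure enters only through \eqref{mcheq}, which we first extend to a neighborhood of $(x_0,\xi_0)$ using continuity.

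\emph{Step 1: local inequality.} Write $\{\Re P_0, g I_N\}=H_g\cdot \Re P_0$ (up to sign convention), which is a continuous matrix function. Since $P_0$ is continuous, \eqref{mcheq} at $(x_0,\xi_0)$ gives a neighborhood $U$ and a constant $C_1$ such that
\[
\{\Re P_0, g I_N\} + C_1 P_0^*P_0 \ge \tfrac{1}{2C} I_N \quad \text{on } U .
\]
To see this, set $A(\rho)=\{\Re P_0,gI_N\}(\rho)+C P_0^*P_0(\rho)$, a Hermitian matrix. At $\rho_0=(x_0,\xi_0)$ we have $A\ge C^{-1}I_N$, and $A$ is continuous, so $A\ge (2C)^{-1}I_N$ near $\rho_0$. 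This $U$ is fixed once and for all, independently of $M$. Shrinking it, we take $U$ to be a box adapted to coordinates in which $g-g(\rho_0)=y_1$.

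\emph{Step 2: construction.} Let $\rho_0=(x_0,\xi_0)$. Take
\[
f=e^{\lambda (g-g(\rho_0))}\,\chi_1(g-g(\rho_0))\,\chi_2(\rho),
\]
with the following ingredients. $\lambda=2CM$ is large. $\chi_1\in C_0^\infty(\R)$ satisfies $\chi_1\ge0$, $\chi_1=1$ on $[-\delta,\delta/2]$, $\chi_1'\ge0$ on $[-2\delta,-\delta]$, and $\supp \chi_1\subset(-2\delta,\delta)$. $\chi_2\ge0$ is a cutoff in the transversal variables, equal to $1$ near $\rho_0$ and supported in a small set. Take $\hat f\in C_0^\infty(U)$ equal to $1$ on $\supp f$, and let $\Omega$ be the compact set $\supp f\cap\{g-g(\rho_0)\ge\delta/2\}$. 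Then \ref{item;C1}, \ref{item;C2} and \ref{item;Co} are immediate. Compute
\[
\{\Re P_0, fI_N\}=\lambda f\{\Re P_0,gI_N\}+e^{\lambda(g-g_0)}\bigl(\chi_1'\chi_2\{\Re P_0,g I_N\}+\chi_1\{\Re P_0,\chi_2 I_N\}\bigr).
\]
The first term is $\ge \lambda f(\tfrac1{2C}-C_1P_0^*P_0)$, and the $P_0^*P_0$ part is absorbed by $C\hat f P_0^*P_0$ with $C\ge \lambda C_1\sup f$. The $\chi_1'$ term on $g-g_0<0$ is nonnegative modulo another $P_0^*P_0$ absorption, by Step 1. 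On $g-g_0>\delta/2$ we are in $\Omega$, where nothing is required.

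\emph{Step 3: the main obstacle.} The difficulty is the transversal cutoff term $\chi_1\{\Re P_0,\chi_2\}$. It is not controlled by $f$ where $\chi_2$ is small, and it has no sign. In the scalar case one avoids it by making $\chi_2$ depend on quantities conserved along the flow, but here no flow is available for the matrix symbol. We would handle it as Dyatlov does, by replacing $\chi_2$ with a cutoff of a function like $\epsilon^{-1}|\rho-\rho_0-(g-g_0)\nu|^2$ combined with $g$, shaped so that the support of $f$ is a thin cone opening in the $+g$ direction. Concretely, take
\[
f=e^{\lambda (g-g_0)}\chi\bigl(\beta|\rho'|^2-(g-g_0)\bigr)\chi_1(g-g_0),
\]
where $\rho'$ are the transversal coordinates and $\chi(s)$ is decreasing, supported in $s<\delta$. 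The derivative of $\chi$ then produces $-\chi'\cdot(H_{\Re P_0}g-\beta\{\Re P_0,|\rho'|^2\})$. Since $\beta\{\Re P_0,|\rho'|^2\}=O(\beta|\rho'|)$ is small on the support, where $|\rho'|^2\lesssim\delta/\beta$, and $-\chi'\ge0$, this term is again $\ge -C\,(\cdot)P_0^*P_0$ by Step 1, provided $\beta\delta$ is small. The requirement $f\ge0$ with $\chi=\sqrt{\cdot}$-type smoothness can be arranged by standard choices. Finally, we check that every remaining non-signed contribution is supported in $\{g-g_0\ge\delta/2\}\cap\supp f$, which we define to be $\Omega$, and that $\Omega\subset U_+$ is compact.
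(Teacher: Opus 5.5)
Your argument is correct once Step 2 is discarded. As you note yourself, the transversal cutoff $\chi_2$ there produces an unsigned term you cannot control. The construction in Step 3 is a genuinely different route from the paper's.

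\textbf{The paper's route.} It never uses a transversal cutoff. It perturbs $z^*$ into $2n$ linearly independent directions $z_k^*=z^*-\delta e_k$. By openness, each still satisfies the microhyperbolicity inequality on $U$. It then sets $f=\prod_k\phi_{M'}(g_k-g_k(z_0))$, where the one-dimensional profile satisfies $\partial_t\phi_{M'}\ge M'\phi_{M'}$ except near its upper endpoint; near its lower endpoint it behaves like $e^{-M'(c+\varepsilon)^2/(t+\varepsilon)}$. The chain rule gives $\{\Re P_0,fI_N\}=\sum_k \partial_{g_k}f\,\{\Re P_0,g_kI_N\}$, with every coefficient $\ge M'f\ge 0$ off $\Omega$. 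So there are no error terms at all, and $\hat f=\sum_k\partial_{g_k}f$ comes out naturally. The price is a geometric check: the set where some $g_k$ is near its upper limit must lie in $\{g>g(z_0)\}$, which uses $g\propto\sum_k g_k$ and $c/\varepsilon>2n-1$.

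\textbf{Your route.} You use microhyperbolicity only in the single direction $g$. The weight $e^{\lambda(g-g_0)}$ alone supplies $Mf$. The paraboloid cutoff $\chi(\beta|\rho'|^2-(g-g_0))$ has the good sign modulo $P_0^*P_0$, because $g-\beta|\rho'|^2$ is a $C^1$-small perturbation of $g$ on the support, with error $O(\sqrt{\beta\delta})$. The only unsigned term comes from the top cap $\chi_1'$, which is supported in $\{g-g_0\ge\delta/2\}\subset U_+$. This is the standard microhyperbolic ``lens'' construction. It needs only one direction and lets $\hat f$ be a nonnegative cutoff. The cost is a quantitative error term, and a constant in (C4) that depends on $M$ through $\lambda$. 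The lemma permits this, and the dependence could in any case be absorbed into $\hat f$.

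\textbf{Details to state explicitly.} Fix $\beta$ (say $\beta=1$) and take $\delta$ small. Then $|\rho'|\le\sqrt{2\delta/\beta}$ keeps $\supp f\subset U$, and $\sqrt{\beta\delta}$ keeps the error below $1/(4C)$. Take $\chi\equiv 1$ on $(-\infty,0]$, so that $f(\rho_0)=1$. Take $\hat f\ge 0$, so that (C4) holds trivially outside $\supp f$. Use different letters for the constant in \eqref{mcheq} and the one in (C4). Finally, the increasing part of $\chi_1$ on $[-2\delta,-\delta]$ is irrelevant, since $\chi(s)=0$ whenever $g-g_0\le-\delta$.
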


\begin{proof}[Proof of Lemma \ref{lem1}]
We write $z = (x,\xi), \; z^* = (x^*,\xi^*), \; z_0 = (x_0,\xi_0) \in \R^{2n}$.
By a suitable linear transformation, we may assume $z^* = (1,1,\ldots,1)$ without loss of generality. 
Let
\begin{equation}\label{deflam}
g_k(z)
= z_k^* \cdot z,
\quad z_k^* = z^* - \delta e_k, \quad k = 1,2,\ldots,2n,  
\end{equation}
where $\{e_k\}_{k=1}^{2n}$ denotes the standard basis of $\R^{2n}$.
Then, for sufficiently small $\delta > 0$, $\{z_k^*\}_{k=1}^{2n}$ is a system on $\R^{2n}$, and, from~\eqref{mcheq}, we can take sufficiently small neighborhood $U$ of $z_0$ such that  
\begin{equation}\label{mch2}
\{\Re P_0, g_k I_N\}(z)
+ C (P_0^* P_0)(z)
\ge C^{-1} I_N
\end{equation}
uniformly in $U$ for sufficiently large $C>0$ for any $k = 1,2,\ldots,2n$.
By linear independence,
the family $\{g_k\}_{k=1}^{2n}$ may be regarded as a system of independent coordinates on $\R^{2n}$.

For $c,\varepsilon>0$ and $M'=\frac{CM}{2n}>0$,
we choose $\phi_{M'} \in C_0^\infty(\R;\R_{\geq 0})$ such that
\begin{enumerate}[label=\textbf{($\Phi$\arabic*):},ref=($\Phi$\arabic*),align=left, leftmargin=*]
  \item\label{item;phi1} $\supp \phi_{M'} = [- \varepsilon, c + \varepsilon ]$, especially $\phi_{M'}(0) > 0$;
  \item\label{item;phi2} $\partial_t \phi_{M'}(t) \geq M' \phi_{M'}(t)$ on $[c, c + \varepsilon ]^c$. 
\end{enumerate}
We can make $\phi_{M'}(t)$ behave like $e^{-\frac{M'(c + \varepsilon)^2}{t + \varepsilon}}$ on $\left[-\varepsilon, c \right]$.

For $\{g_k\}$ in \eqref{deflam}, we set
\begin{equation} \label{deff}
  f(z) = \prod_{k=1}^{2n} \phi_{M'}\left( g_k(z) - g_k(z_0) \right), \quad
  \hat{f}(z) = \sum_{k=1}^{2n} \frac{\partial f(z)}{\partial g_k}.
\end{equation}

By~\ref{item;phi1} and the independence of $\{g_k\}$, $f$ is compactly supported in
$$
\supp f = \bigcap_k \{z \in \R^{2n};\ -  \varepsilon \le g_k(z) - g_k(z_0) \le c + \varepsilon \},
$$
and hence condition~\ref{item;C1} is trivially satisfied.

It follows from \ref{item;phi2} that
\begin{equation}\label{Mf}
\frac{\partial f(z)}{\partial g_k}  \geq  M'  f(z)
\end{equation}
on the complement of $\Omega := \supp f \setminus \bigcap_k \{z \in \R^{2n}; \; -  \varepsilon \leq g_k(z) - g_k(z_0) <  c \}$. 

Since the vectors $\{z^*, z_1^*, z_2^*, \ldots, z_{2n}^*\}$ are linearly dependent,
the set $\Omega$ is contained in $\{ g(z) > g(z_0) \}$,
provided that $c/\varepsilon$ is chosen sufficiently large and $\delta > 0$ is taken sufficiently small.
In particular, in the case~\eqref{deflam}, it suffices to take $c/\varepsilon > 2n-1$.
Furthermore, if $c>0$ is chosen sufficiently small, then conditions~\ref{item;C2} and~\ref{item;Co} hold: $\supp f \subset U, \; \Omega \subset U_+$.

Finally, we check the condition~\ref{item;C3}.
Using \eqref{mch2} and \eqref{Mf}, and observing that, by the chain rule,
$$
\{\Re P_0,f I_N\} = \sum_{k=1}^{2n} \frac{\partial f(z)}{\partial g_k} \{\Re P_0, g_k I_N \},
$$
we obtain 
$$
\{\Re P_0,f I_N\} + C \hat{f} P_0^*P_0
= \sum_k \frac{\partial f(z)}{\partial g_k} \big(\{\Re P_0, g_k I_N \} + C P_0^* P_0 \big) 
\geq M f I_N
$$
on the complement of $\Omega$.
\end{proof}

By Lemma~\ref{lem1} and its proof, we obtain the following remark.
\begin{remark}\label{rem1}
{\it There exists a neighborhood $U$ of $(x_0,\xi_0)$ such that, for any constant $M > 0$,
there exist functions $\{f_l(x,\xi)\}_{l=0}^{\infty}$ and $\{\hat f_l(x,\xi)\}_{l=0}^{\infty}$ in
$C_0^\infty(\mathbb{R}^{2n}; \mathbb{R})$
and compact sets $\{\Omega_l\}_{l=0}^{\infty}$
for which the following properties hold:
\begin{enumerate}[label=\textbf{($D$\arabic*):},ref=($D$\arabic*),leftmargin=*]
  \item\label{item;D1} $f_0(x_0,\xi_0) > 0$, and $f_0 \ge 0$ everywhere;
  \item\label{item;D2} $\supp f_l \cup \supp \hat f_l \subset \{ f_{l+1} > 0 \} \subset U$;
  \item\label{item;D3} $\overline{\bigcup_l \Omega_l} \subset U_+$;
  \item\label{item;D4} there exists a constant $C > 0$ such that
  $\{\Re P_0, f_l I_N\} + C\,\hat f_l P_0^* P_0 \ge M f_l I_N$
  on the complement of $\Omega_l$.
\end{enumerate}}

Here we describe the key idea of the proof of the above claim.
The families $\{f_l\}$ and $\{\Omega_l\}$ can be constructed as follows, similarly to~\eqref{deff}:
\begin{align*}
  &f_l(x,\xi)
= \prod_{k=1}^{2n}
\phi_{M_l'}\!\left(
\frac{g_k(x,\xi)-g_k(x_0,\xi_0)}{\alpha_l}
\right), \\
&\Omega_l = \supp f_l \setminus \bigcap_k
\left\{ -\varepsilon \le \frac{g_k(x,\xi)-g_k(x_0,\xi_0)}{\alpha_l} < c \right\}.
\end{align*}
Here, $\{\alpha_l\}_{l\ge 0}$ is an increasing sequence of positive numbers satisfying
$\alpha_l \to 1$ as $l \to \infty$ and $M_l' = \alpha_l M'$.
Set $\hat f_l$ as \eqref{deff}, then
conditions~\ref{item;D1}, \ref{item;D2}, and~\ref{item;D4} follow immediately.
To ensure~\ref{item;D3}, noting that
$
\overline{\bigcup_l \Omega_l} = \bigcup_l \Omega_l \cup \Omega,
$
it suffices to check that $\Omega_l \subset U_+$ for all $l$,
which can be done as $\Omega \subset U_+$ in the proof of the lemma.
\end{remark}


The following proposition is called \textit{propagation estimate} (see \cite{dyatlov2019mathematical}).

\begin{proposition}\label{propest}
Assume that $P \in S(1)$ is of the form~\eqref{H_0} and satisfies~\eqref{mcheq}, and let $U$ and $f_0$ be as in Remark~\ref{rem1}.
Moreover, assume that $\Im P_0 \ge O$ in $U$.
Then there exist functions $a, b \in C_0^\infty(\R^{2n})$ such that
$ \supp a \subset U_+, \;
  \supp b \subset U$,
and
\begin{equation}\label{proest}
  \|f_0^W u\| \leq C\|a^W u\| + Ch^{-1}\| b^W P^W u \| + \Order (h^\infty)\|u\|,
\end{equation}
for $C>0$ sufficiently large.
\end{proposition}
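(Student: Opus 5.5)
We follow Dyatlov's positive commutator argument, with Remark~\ref{rem1} supplying the escape functions and an induction on $l$ absorbing the lower-order errors. Fix $M$ large, to be chosen below. Let $f_l,\hat f_l,\Omega_l$ be as in Remark~\ref{rem1}. Choose $a\in C_0^\infty(U_+)$ with $a=1$ near $\overline{\bigcup_l\Omega_l}$, which is possible by~\ref{item;D3}. Choose $b\in C_0^\infty(U)$ with $b=1$ near $\bigcup_l \supp f_l$; we may first shrink $U$ so that these supports stay in a fixed compact subset of $U$. The core claim to prove by induction on $l$ is
\begin{equation*}
\|f_l^W u\|\le C\|a^Wu\|+Ch^{-1}\|b^WP^Wu\|+C h^{1/2}\|f_{l+1}^W u\|+\Order(h^\infty)\|u\|.
\end{equation*}
Iterating this $K$ times and using $\|f_K^Wu\|\le C\|u\|$ gives~\eqref{proest} with error $\Order(h^{K/2})$. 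To reach $\Order(h^\infty)$ we either use the whole infinite family, or simply note that the final statement only needs $h^{K}$ for every $K$.

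To prove the single step, write $F=f_l^W$ (scalar, so it commutes with $I_N$) and compute the imaginary part of $\langle P^Wu,F^*Fu\rangle$. We have
\begin{equation*}
\Im\langle P^Wu,F^2u\rangle=\tfrac{1}{2i}\langle [\Re P^W,F^2]u,u\rangle+\langle F(\Im P)^WFu,u\rangle+\tfrac12\langle[[F,(\Im P)^W],F]u,u\rangle .
\end{equation*}
Here $\Re P^W=(P^W+(P^W)^*)/2$ has symbol $\Re P_0+\Order(h)$. The commutator term has principal symbol $\tfrac{h}{2}\{\Re P_0,f_l^2\}=h f_l\{\Re P_0,f_l\}$. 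For the term $\langle F(\Im P)^WFu,u\rangle$ we use $\Im P_0\ge O$ on $U$ and the matrix-valued sharp G\aa rding inequality applied to $Fu$ (microlocalized in $U$). This gives that the term is $\ge -Ch\|f_{l+1}^Wu\|^2-\Order(h^\infty)$. The double commutator is $\Order(h^2)$, with symbol supported in $\supp f_l$.

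Next we use \ref{item;D4}, multiplied by $f_l\ge0$:
\begin{equation*}
f_l\{\Re P_0,f_l\}+Cf_l\hat f_lP_0^*P_0\ge Mf_l^2 I_N-\mathbf 1_{\Omega_l}\cdot(\text{bounded}).
\end{equation*}
After quantization and another application of sharp G\aa rding, this yields
\begin{equation*}
hM\|Fu\|^2\le 2|\langle P^Wu,F^2u\rangle|+Ch\|(f_l\hat f_l)^{1/2\,W}P^Wu\|^2+Ch\|a^Wu\|^2+Ch^2\|f_{l+1}^Wu\|^2+\Order(h^\infty).
\end{equation*}
The $\Omega_l$ contribution is bounded by $\|a^Wu\|$ because $a=1$ there. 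The term $\hat f_lP_0^*P_0$ becomes $\|\cdot\, P^Wu\|^2$ up to $\Order(h)$ errors controlled by $f_{l+1}$; this is justified since $P^W=P_0^W+\Order(h)$ and $\supp\hat f_l\subset\{f_{l+1}>0\}$. By Cauchy--Schwarz, $|\langle P^Wu,F^2u\rangle|\le \tfrac{hM}{4}\|Fu\|^2+\tfrac{C}{hM}\|b^WP^Wu\|^2$. Absorbing this and dividing by $hM$ with $M$ fixed gives the claim. Since $\hat f_l$ may change sign, the product $f_l\hat f_l$ in the $P_0^*P_0$ term should be handled by bounding it above by $C f_{l+1}^2$-type cutoffs, so that it stays a positive-operator error controlled by $b^WP^W u$.

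The main obstacle is the matrix-valued positivity step. We must pass from the symbolic inequality, which holds only off $\Omega_l$ and involves the non-commuting factors $\{\Re P_0,f_l\}$ and $P_0^*P_0$, to an operator inequality with errors that are genuinely $\Order(h^2)$ and supported in $\{f_{l+1}>0\}$. We will use the sharp G\aa rding inequality for Hermitian matrix symbols, which holds with $\Order(h)$ loss. Applied to symbols already carrying a factor $h$, it gives the needed $h^2$ remainder. We will write the symbol as $f_l\,(\cdots)\,f_l$ so that the remainder has symbol supported where $f_l\ne0$. We then localize each remainder with $f_{l+1}$ using \ref{item;D2} and the composition calculus, with the $\Order(h^\infty)$ tails coming from the disjoint-support property.
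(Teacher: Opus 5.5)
Your architecture matches the paper's: the family $f_l$ from Remark~\ref{rem1}, the splitting of $\Im\langle P^Wu,(f_l^W)^2u\rangle$ into a $(\Re P)^W$ commutator and a $(\Im P)^W$ part, sharp G\aa rding, a cutoff equal to $1$ on $\Omega_l$, control of the $\hat f_lP_0^*P_0$ term by $P^Wu$, and iteration via the $h^{1/2}$ gain. Your sign slips are harmless: $[\Re P^W,F^2]$ should be $[F^2,\Re P^W]$, and the double commutator enters with a minus sign.

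There is, however, a genuine gap in the term $\langle F(\Im P)^WFu,u\rangle$. This is the only place where $\Im P_0\ge O$ is used. The term carries no factor of $h$, and its lower bound must lose a full power of $h$. Sharp G\aa rding for matrix symbols gives only $-Ch$, and in any case the part $h\,\Im R$ of $\Im P$ has no sign. You record the loss as $-Ch\|f_{l+1}^Wu\|^2$, and then it disappears from your displayed inequality, which contains only $Ch^2\|f_{l+1}^Wu\|^2$. Kept in that form, dividing by $hM$ turns it into $\frac{C}{M}\|f_{l+1}^Wu\|^2$ with no power of $h$. The step estimate with $h^{1/2}\|f_{l+1}^Wu\|$ then fails, and the iteration yields nothing. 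Your closing claim that all remainders are $\Order(h^2)$, because G\aa rding is applied to symbols already carrying a factor $h$, is true only for the commutator term.

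The missing idea is what the size of $M$ is for. Bound the term by $-C_0h\|f_l^Wu\|^2-\Order(h^\infty)\|u\|^2$, in terms of $\|f_l^Wu\|$ rather than $\|f_{l+1}^Wu\|$. Here $C_0$ is the sharp G\aa rding constant of $\tilde\chi^W(\Im P)^W\tilde\chi^W$, for a fixed $\tilde\chi\in C_0^\infty(U)$ equal to $1$ on $\bigcup_l\supp f_l$; its symbol is $\tilde\chi^2\Im P_0+\Order(h)$ with $\tilde\chi^2\Im P_0\ge O$ everywhere. In the construction of Remark~\ref{rem1} these supports do not depend on $M$, so $C_0$ is independent of $M$ and of the $f_l$. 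Then choose $M>C_0$ before constructing the $f_l$. The term $h(M-C_0)\|f_l^Wu\|^2$ then survives on the left and absorbs the rest.

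This is exactly the paper's choice of $M$ larger than the G\aa rding constant of $(\Im P)^W$, and it is what the exponential growth of the escape function is designed to achieve. Your ``fix $M$ large, to be chosen below'' is never tied to anything, and with $M$ merely fixed the argument does not close. With this correction, the rest of your outline goes through along the same lines as the paper.
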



By the assumptions~\eqref{wfhu} and~\eqref{wvu+} of the theorem,  together with the assumptions of this proposition, the first and second terms on the right-hand side of the inequality~\eqref{proest} can both be estimated by $\mathcal{O}(h^\infty)\|u\|$.
Since $f(x_0,\xi_0) > 0$, we conclude that $(x_0,\xi_0) \notin \WF(u)$ and complete the proof of Theorem~\ref{PoS1}.

\begin{proof}[Proof of Proposition~\ref{propest}]

Let $\{f_l\}$ and $\{\Omega_l\}$ be as in Remark~\ref{rem1}.
The constant $M > 0$ in Remark~\ref{rem1} will be chosen later.
We choose $\{a_l\} \subset C_0^\infty(\R^{2n})$ such that
\begin{enumerate}[label=\textbf{($A$0):}, ref=($A$0), align=left, leftmargin=*]
\item\label{item;a0}$a_l \equiv 1$ on $\Omega_l$, and $\supp a_l \subset U_+ \cap \{f_{l + 1} \ne 0\}$.
\end{enumerate}

Moreover, we choose symbols $a, b \in C_0^\infty(\R^{2n})$ such that
\begin{enumerate}[label=\textbf{($A$\arabic*):}, ref=($A$\arabic*), align=left, leftmargin=*]
  \item\label{item;a1} $a \equiv 1$ on $\bigcup_l \supp a_l$, and $\supp a \subset U_+$;
  \item\label{item;a2} $b \equiv 1$ on  $\bigcup_l \supp f_l$, and $\supp b \subset U$.
\end{enumerate}
By~\ref{item;D3}, the existence of such symbols $a$ and $\{a_l\}$ is guaranteed.
Moreover, by Remark~\ref{rem1}, for the function $f$ in Lemma~\ref{lem1},
we have $\supp f_l \subset \{ f \neq 0 \}$.
The existence of $b$ follows from~\ref{item;C2}.

We first assume that, for any $l \geq 0$ and any $u \in L^2$, we have
\begin{equation}\label{eq;goal}
\|f_l^W u\| \leq C(\|a_l^W u\| + h^{1/2} \|f_{l+1}^W u\| + h^{-1}\|f_{l+1}^W \HW u\|) + \Order(h^\infty)\|u\|,
\end{equation}
where $\HW = P^W$ and  $\|\cdot\|$ denotes the $L^2$-norm.
Then, for any $l$, there exists $C > 0$ such that
\begin{equation}\label{eq;2}
  \begin{split}
    \|f_0^W u\| \leq &C \sum_{m = 0}^{l} (h^{m/2} \|a_{m}^W u\| + h^{m/2-1} \|f_{m+1}^W \HW u\| ) 
    \\ &+ C h^{(l+1) /2} \|f_{l+1}^W u\| + \Order(h^\infty)\|u\|.
  \end{split}
\end{equation}

From \ref{item;a1} and \ref{item;a2}, 
we obtain that, modulo $\Order(h^\infty) \|u\|$,
$$
\|a_l^W u\| \leq C \|a^W u\|, \qquad \|f_l^W u\| \leq C \|b^W u\| 
$$
and it follows from~\eqref{eq;2} that for any $l \geq 0$, there exist $C > 0$ such that
\begin{equation}
\|f_0^W u\| \leq C(\|a^W u\| + h^{(l + 1)/2} \|b^W u\| + h^{-1}\|b^W \HW u\|) + \Order(h^\infty)\|u\|.
\end{equation}
This implies Proposition~\ref{propest}.

\medskip

Next, we prove the inequality~\eqref{eq;goal}.

We may assume $l=0$ without loss of generality.
We write
\[
P_1 = \Re P, \quad P_2 = \Im P,  \quad \PW= P_1^W,  \quad \QW = P_2^W.
\]
We now compute, denoting by $\langle \cdot, \cdot \rangle$ the $L^2$ inner product,
\begin{equation}
\begin{aligned}\label{P_0Rdcmp}
\Im \left\langle \HW u, (f_0^W)^2 u \right\rangle
&= \Im \left\langle \PW u, (f_0^W)^2 u \right\rangle 
 + \Re \left\langle \QW u, (f_0^W)^2 u \right\rangle \\
&= \frac{1}{2i}\left\langle [(f_0^W)^2 I_N, \PW] u, u \right\rangle
 + \Re \left\langle \QW u, (f_0^W)^2 u \right\rangle.
\end{aligned}
\end{equation}

We first estimate the second term of RHS in \eqref{P_0Rdcmp}, which can be decomposed as
\begin{equation}
\Re \left\langle \QW u, (f_0^W)^2 u \right\rangle
= \left\langle \QW f_0^W u, f_0^W u \right\rangle
  + \Re \left\langle [f_0^W I_N, \QW] u, f_0^W u \right\rangle.
\label{Rdcmp}
\end{equation}

Then, by the sharp G\aa rding inequality\begin{footnote}
{The sharp Gårding inequality for the matrix case can be proved by the anti-Wick quantization approach.}\end{footnote}, there exists a constant $C_0>0$, independent of $f_0$, such that
$$
 \langle \QW f_0^W u, f_0^W u \rangle
\geq - C_0 h \|f_0^W u\|^2.
$$

Next, we estimate the first term of RHS in~\eqref{P_0Rdcmp}.
Since the principal symbol\begin{footnote}
{The composition of a matrix-valued pseudodifferential operator and a scalar-valued pseudodifferential operator satisfies the same well-known formula as in the scalar case, since their symbols commute.}\end{footnote} $\frac{h}{i} \{ f_0 I_N, \Im P_0 \}$ of $[f_0^W I_N, \QW]$ is an anti-Hermitian matrix, and since $f_0$ is supported in $\{f_1 = 1\}$ by~\ref{item;D2},  the second term in~\eqref{Rdcmp} satisfies
$$
\Re \left\langle [f_0^W I_N, \QW] u, f_0^W u \right\rangle 
  \geq - \Order(h^2) \|f_1^W u\|^2 - \Order(h^\infty)\|u\|^2,
$$
and hence we obtain
$$
\Re \left\langle \QW u, (f_0^W)^2 u \right\rangle 
  \geq - C_0 h \|f_0^W u\|^2 - \Order(h^2)\|f_1^W u\|^2 - \Order(h^\infty)\|u\|^2. 
$$

Returning to \eqref{P_0Rdcmp} and dividing by $h$, we have
\begin{equation}\label{eq;3}
  \begin{split}
    h^{-1} \Im \left\langle \HW u, (f_0^W)^2 u \right\rangle 
    \geq & \frac{1}{2ih}\left\langle [(f_0^W)^2 I_N, \PW] u, u \right\rangle
    - C_0 \|f_0^W u\|^2 \\ & - \Order(h)\|f_1^W u\|^2 + \Order(h^\infty)\|u\|^2.
  \end{split}
\end{equation}

Since $P_1 = \Re P$, the operator $(2ih)^{-1}[(f_0^W)^2 I_N, \PW]$ has the principal symbol $f_0 \{\Re P_0, f_0 I_N\}$.
We define a matrix-valued symbol $G$ by
$$
G (x,\xi;h) = f_0 \big( \{\Re P_0,f_0 I_N\} + C^{1/2} \hat{f_0} P_0^*P_0 - (C_0 + C^{-1}) f_0 I_N \big) 
             + C a_0^2 I_N,
$$
and choose the constant $M$ in Remark~\ref{rem1} larger than $C_0$, which is determined by $\QW$.
Then $\{\Re P_0, f_0 I_N\} + C^{1/2} \hat{f_0} P_0^*P_0 - (C_0 + C^{-1}) f_0 I_N$ is nonnegative on the complement of $\Omega_0$ for sufficiently large $C>0$.

On the other hand, on $\Omega_0$, the function $\hat f_0$ is not nonnegative and $a_0 \equiv 1$ by~\ref{item;a0}.
Since $C$ is taken sufficiently larger than $C^{1/2}$,
the function $C^{1/2}\hat f_0 f_0 P_0^* P_0 + C a_0^2 I_N$, and hence $G$, is positive on $\Omega_0$.
Then, for sufficiently large $C>0$, the function $G$ is nonnegative everywhere.


By choosing $\chi(x,\xi) \in C_0^\infty(\mathbb{R}^{2n})$ such that
$
\supp(\det G) \subset \{\chi = 1\}, \; \supp \chi \subset \{ f_1 > 0 \},
$
we have
\[
\left\langle G^W u, u \right\rangle
= \left\langle G^W \chi^W u, \chi^W u \right\rangle
+ \mathcal{O}(h^\infty)\|u\|^2,
\]
since the symbol $G$ is supported in $\{ f_1 > 0 \}$ by~\ref{item;D2} and~\ref{item;a0}.
Therefore, applying the sharp G\aa rding inequality to the first term
on the right-hand side and since  $\|\chi^W u\| \leq C \|f_1^W u\| + \Order (h^\infty)$, we obtain
\[
\left\langle G^W u, u \right\rangle
\ge - C h \| f_1^W u \|^2 + \mathcal{O}(h^\infty)\|u\|^2.
\]

Then, by~\eqref{eq;3}, we have, modulo $\Order(h^\infty)\|u\|^2$,
\begin{equation*}
  \begin{split}
    h^{-1}\Im \left\langle \HW u, (f_0^W)^2 u \right\rangle
    \geq & - C^{1/2} \left\langle f_0^W \hat{f_0}^W \HW^* \HW u, u \right\rangle \\
    & - C \|a_0^W u\|^2 + \frac{1}{C} \|f_0^W u\|^2 - \Order(h)\|f_1^W u\|^2.
  \end{split}
\end{equation*}
From~\ref{item;D2} and the Cauchy--Schwarz inequality, we have
$$
h^{-1}\Im \left\langle \HW u, (f_0^W)^2 u \right\rangle
+ C^{1/2} \left\langle f_0^W \hat{f_0}^W \HW^* \HW u, u \right\rangle
\leq C h^{-1}\|f_1^W \HW u\| \| f_0^W u \|,
$$
for sufficiently small $h > 0$, and hence we have
$$
\|f_0^W u\|^2 \leq C(\|a_0^W u\|^2 + h \|f_1^W u\|^2 + h^{-1}\|f_1^W \HW u\| \| f_0^W u \|)   + \Order(h^\infty)\|u\|^2.
$$

Since, for any $C > 0$,
$$
h^{-1}\|f_1^W \HW u\| \| f_0^W u \| \leq C h^{-2}\|f_1^W \HW u\|^2 + C^{-1} \| f_0^W u \|^2,
$$
it follows that
$$
\|f_0^W u\|^2 \leq C(\|a_0^W u\|^2 + h \|f_1^W u\|^2 + h^{-2}\|f_1^W \HW u\|^2) + \Order(h^\infty)\|u\|^2,
$$
for sufficiently large $C > 0$.
This implies that the inequality~\eqref{eq;goal}:
$$
\|f_0^W u\| \leq C(\|a_0^W u\| + h^{1/2} \|f_1^W u\| + h^{-1}\|f_1^W \HW u\|) + \Order(h^\infty)\|u\|.
$$
This completes the proof of Proposition~\ref{propest}.

\end{proof}


\end{document}